% -*- mode: LaTeX; -*-
\def\timestamp{%
Time-stamp: <PFA-free.tex: Monday 25-10-2021 at 17:48:05 (cest)>}
\def\stripname Time-stamp: <#1 #2>{#2}
\edef\filedate{\expandafter\stripname\timestamp}

\documentclass[a4paper]{amsart}

\newcommand\Fn{\operatorname{Fn}}
\newcommand\cl{\operatorname{cl}}

\newcommand\cee{\mathfrak{c}}

\newcommand\axiom{\mathsf}
\newcommand\PFA{\axiom{PFA}}
\newcommand\MA{\axiom{MA}}
\newcommand\CH{\axiom{CH}}

\DeclareMathSymbol\PP0{AMSb}{`P}
\DeclareMathSymbol\Q0{AMSb}{`Q}
\DeclareMathSymbol\T0{AMSb}{`T}
\DeclareMathSymbol\le    \mathrel{AMSa}{"36}
\DeclareMathSymbol\ge    \mathrel{AMSa}{"3E}
\DeclareMathSymbol\forces\mathrel{AMSa}{"0D}

\newcommand\orpr[2]{\langle{#1},{#2}\rangle}

\newcommand\omegaoneseq[2][\omega_1]{\langle{#2}_\alpha:\alpha\in#1\rangle}

\newcommand\calA{\mathcal{A}}
\newcommand\calU{\mathcal{U}}
\newcommand\pow{\mathcal{P}}

\theoremstyle{plain}
\newtheorem{theorem}{Theorem}%[section]
\newtheorem{lemma}[theorem]{Lemma}

\theoremstyle{definition}
\newtheorem{definition}[theorem]{Definition}

\usepackage{amsrefs}

\begin{document}

\title{$\PFA$ and $\omega_1$-free compact spaces}

\author[A. Dow]{Alan Dow}
\address{
Department of Mathematics\\
UNC-Charlotte\\
9201 University City Blvd. \\
Charlotte, NC 28223-0001}
\email{adow@uncc.edu}
\urladdr{https://webpages.uncc.edu/adow}

\author[K. P. Hart]
       {Klaas Pieter Hart}

\address{
Faculty EEMCS\\
TU Delft\\
Postbus 5031\\
2600~GA {} Delft\\
the Netherlands}
\email{k.p.hart@tudelft.nl}
\urladdr{http://fa.its.tudelft.nl/\~{}hart}

\subjclass[2020]{Primary: 54D30. 
                 Secondary: 03E35, 03E50, 03E57, 54A20, 54A35, 54D20}

\keywords{$\PFA$, compact, convergence, $\omega_1$-sequence, first-countable}

\date{\filedate}

\begin{abstract}
The Proper Forcing Axiom implies that compact Hausdorff spaces are either 
first-countable or contain a converging $\omega_1$-sequence.
\end{abstract}

\maketitle

\section*{Introduction}

This paper continues the investigation from~\cite{MR3194414} into
a conjectured dichotomy for compact Hausdorff spaces: every such space
is either first-countable or it has a non-trivial converging 
$\omega_1$-sequence.
Whether this dichotomy holds was asked by Juh\'asz and a consistent 
counterexample to this conjecture was given by him,
Koszmider, and Soukup in~\cite{MR2519221}.

The main result in~\cite{MR3194414} states that the dichotomy holds in a 
large class of ccc forcing extensions of models in which the Continuum 
Hypothesis holds. 
This result does not encompass all ccc extensions and one can rightfully ask
what the status of the dichotomy is under Martin's Axiom.

In Section~\ref{sec:MA} we point out that a space constructed
by the first author in~\cite{MR3426909} witnesses that 
the combination $\MA+{\cee=\aleph_2}$ is not strong enough to
imply the dichotomy.

Our main result appears in Section~\ref{sec:PFA}: the Proper Forcing Axiom
implies the dichotomy.
This then also implies that the dichotomy is consistent with and independent
of $\MA+{\cee=\aleph_2}$.
It also improves upon the first author's result 
in~\cite{MR3205487}*{Theorem~5.1},
which implies that, in turn, $\PFA$~implies that a compact space without
convergent $\omega_1$-sequences must be Fr\'echet-Urysohn.

\section{Some preliminaries}

Here we collect the notions that will be used throughout the paper.
Others will be defined when needed.

\subsection*{Converging $\omega_1$-sequences}

The central notion is that of a converging $\omega_1$-se\-quence. 
An $\omega_1$-sequence $\omegaoneseq{x}$ \emph{converges} to a point~$x$
if for every neighbourhood~$U$ of~$x$ there is an~$\alpha$ such that
$\{x_\beta:\beta\ge\alpha\}\subseteq U$.
Unless stated otherwise our converging sequences are assumed to be non-trivial,
that is, injective.
A space that contains no non-trivial converging $\omega_1$-sequences
will be called \emph{$\omega_1$-free}.

In all cases where we construct converging $\omega_1$-sequences we use
the regularity of the space to show convergence:
an $\omega_1$-sequence $\omegaoneseq{x}$ converges to a point~$x$ if and only
if for every neighbourhood~$U$ of~$x$ there is an~$\alpha$ such that
$\{x_\beta:\beta\ge\alpha\}\subseteq\cl U$.

We should record here that the dichotomy is indeed a dichotomy.
If a point has a countable local base then it is not the limit
of a non-trivial $\omega_1$-sequence: that sequence would have to be constant
on a tail.

\subsection*{Free sequences}

At one point we shall need to know that a compact $\omega_1$-free
space has countable tightness, which for the purposes of this
paper is best defined by a characterization:
there is no free $\omega_1$-sequence.

An $\omega_1$-sequence $\omegaoneseq{x}$ is \emph{free} if for every~$\alpha$
the sets $\{x_\beta:\beta<\alpha\}$ and $\{x_\beta:\beta\ge\alpha\}$ have 
disjoint closures.

\section{$\MA$ is not enough}
\label{sec:MA}

In this section we show that the conjuction of $\MA$ and
the equality $\cee=\aleph_2$ is not strong enough to give a positive
answer to Juh\'asz' question.
For this we recall the definition of initial $\omega_1$-compactness:
every open cover of cardinality at most~$\aleph_1$ has a finite subcover.

In \cite{MR3426909}*{Corollary~5.5} one finds a 
model of $\MA+{\cee=\aleph_2}$ in which there is a compact space~$X$
that is of countable tightness, and has a proper dense subspace~$Y$
that is first-countable and initially $\omega_1$-compact.
As $Y$ is a fortiori countably compact the space~$X$ is not first-countable
at all points that are not in~$Y$.
In addition every compact subset of $X\setminus Y$ is finite.
We shall show that $X$ is $\omega_1$-free.

Assume $\omegaoneseq{x}$ is a converging $\omega_1$-sequence with limit~$x$.
Since, as noted above, a point of first-countability cannot be the 
limit of a non-trivial $\omega_1$-sequence we must have $x\in X\setminus Y$.

Since $x$~is the only complete accumulation point of the set
$\{x_\alpha:\alpha\in\omega_1\}$ the initial $\omega_1$-compactness of~$Y$
implies that all but countably many~$x_\alpha$ are in~$X\setminus Y$.

Thus we assume the~$x_\alpha$ are all in~$X\setminus Y$.
For each limit ordinal~$\delta$ we choose a cofinal subset~$C_\delta$
of order type~$\omega$ and an accumulation point~$y_\delta$ of
$\{x_\alpha:\alpha\in C_\delta\}$; by the assumption on~$X\setminus Y$
we can take $y_\delta$ in~$Y$. 

There is a point~$z$ in~$Y$ with the property that for every 
neighbourhood~$U$ of~$z$ the set $\{\delta:y_\delta\in U\}$ is uncountable.
But then so is $\{\alpha:x_\alpha\in U\}$ for every such neighbourhood.
But then $z=x$ and we have our contradiction.

\section{The main result}
\label{sec:PFA}

In this section we prove our main result.

\begin{theorem}[$\PFA$]
Every compact Hausdorff space either contains a non-trivial converging
$\omega_1$-sequence or is first-countable.
\end{theorem}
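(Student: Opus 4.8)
The plan is to prove the contrapositive: assuming $X$ is compact Hausdorff and $\omega_1$-free, show it is first-countable. Fix a point $x\in X$; I want a countable local base at~$x$. The first reduction is to invoke the fact that an $\omega_1$-free compact space has countable tightness (the free-sequence characterization recorded above), so every point is in the closure of a countable subset of any set accumulating to it, and closures can be computed along countable sets. Countable tightness together with $\omega_1$-freeness is the combination I expect to drive everything: it lets me approximate neighbourhood behaviour by countable data while forbidding the obvious obstruction to first-countability, namely a long converging sequence.

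The core of the argument should be a Proper Forcing Axiom reflection or side-condition argument. I would attempt to build, by induction along $\omega_1$, a continuous increasing chain $\langle M_\alpha:\alpha\in\omega_1\rangle$ of countable elementary submodels of some large $H(\theta)$ containing $X$ and~$x$, and consider the traces of the neighbourhood filter of~$x$ on these models. If $x$ is \emph{not} first-countable, then at each stage the countably many ``$M_\alpha$-neighbourhoods'' fail to form a base, so I can choose a point $x_\alpha$ witnessing a neighbourhood of~$x$ that is not refined by the model's data. The goal is to arrange these choices so that the resulting $\omega_1$-sequence $\langle x_\alpha:\alpha\in\omega_1\rangle$ converges to~$x$, contradicting $\omega_1$-freeness. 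To force convergence one designs a proper poset whose conditions are finite approximations to such a sequence together with finite $\in$-chains of elementary submodels as side conditions (in the style of Todorcevic), and then applies $\PFA$ to an $\aleph_1$-sized family of dense sets asserting that the sequence is long, injective, and eventually inside each basic neighbourhood of~$x$.

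The main obstacle I anticipate is \textbf{properness of the forcing together with genericity of the generic sequence}: one must show the poset is proper (so that $\PFA$ applies) \emph{and} that a filter meeting the chosen dense sets actually yields a \emph{converging} $\omega_1$-sequence rather than merely an $\omega_1$-sized discrete set or a sequence with many accumulation points. Controlling accumulation is where compactness and countable tightness must be exploited: using countable tightness one argues that any accumulation point of a tail of the generic sequence is captured inside the elementary submodels, and the side-condition structure then pins the only possible limit to be~$x$ itself. I would expect the delicate verification to be that the density requirements ``push a new point into every neighbourhood of~$x$ coded in a submodel'' can be met without destroying properness, i.e.\ that amalgamation of conditions over a countable model~$M$ succeeds precisely because $x$ fails to be first-countable, so there is always room to place the next point below the level already decided by~$M$.

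A secondary concern, which should be handled before the forcing, is the reduction ensuring the target is genuinely a point of \emph{non}-first-countability whose neighbourhood filter has uncountable character in a structured way; here I would use regularity (so that convergence may be checked on closures, as noted above) to make the eventual $\omega_1$-sequence converge via closed neighbourhoods, which simplifies the density conditions to statements about closed sets coded in the models. If this scheme goes through, the contradiction with $\omega_1$-freeness completes the proof.
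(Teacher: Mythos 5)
Your overall framing (argue by contradiction from a compact, $\omega_1$-free, non-first-countable $X$; use countable tightness via the free-sequence characterization; build a chain of countable elementary submodels; apply $\PFA$ to a proper poset to produce a converging $\omega_1$-sequence) matches the paper's skeleton. But the heart of your plan --- a single side-condition poset whose generic directly yields a converging $\omega_1$-sequence, with properness and convergence of the generic to be ``verified'' --- is exactly the part you leave blank, and it is the part that does not work as stated. You correctly identify the obstacle yourself: nothing in your setup explains why the finite-approximation poset should be proper, nor why a filter meeting $\aleph_1$ dense sets would produce a sequence that \emph{converges} rather than an uncountable set with many accumulation points. Those two difficulties are not routine verifications; they are the reason the paper does not take this route. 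In the actual argument the converging sequence is never forced directly. Instead, one first reduces to a separable $X$ of cardinality at most $\cee$ (via Balogh's theorem that $\PFA$ implies compact countably tight spaces are sequential), forces with $\Fn(\omega_1,2,\aleph_1)$ to obtain $\CH$ while preserving compactness, and establishes an ``ultra-Fr\'echet'' property of $X$ (itself requiring a separate $\PFA$ application through a ccc poset producing a Luzin almost disjoint family). Only then does the elementary-submodel construction produce, for each $\alpha$, a \emph{pair} of disjoint convergent sets $a_\alpha\to z$ and $b_\alpha\to y_\alpha$ generating an ultrafilter on $M_\alpha\cap\pow(D)$; a known result of Todor\v{c}evi\'c supplies a proper poset forcing an uncountable $J$ on which $(a_\alpha\cap b_\beta)\cup(a_\beta\cap b_\alpha)\neq\emptyset$, and the convergence of $\omegaoneseq{y}$ to $z$ is then a ZFC pigeonhole argument from this linkedness --- not a genericity argument.

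Concretely, the gaps are: (i) you never define the poset, so properness cannot be assessed, and the analogous posets in the literature are proper only because of heavy preparatory structure (here: $\CH$ in the intermediate extension, which is what makes $\pow(D)\subseteq\bigcup_\alpha M_\alpha$ and lets the submodels capture all relevant neighbourhood data); (ii) you have no mechanism to ``pin the only possible limit to be $x$'' --- in the paper this is achieved by the combinatorics of the pairs $(a_\alpha,b_\alpha)$ relative to a fixed neighbourhood $O$ of $z$, using that each $a_\alpha\setminus O$ is finite and a counting argument on the sets $b_\alpha\cap O$, not by density requirements in a forcing; and (iii) you omit the reductions (separability, $|X|\le\cee$, preservation of compactness under $\Fn(\omega_1,2,\aleph_1)$, ultra-Fr\'echet-ness) without which the submodel construction has no traction. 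As it stands the proposal is a plausible research plan, not a proof.
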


We shall prove the theorem in a number of steps.
We assume $\PFA$ and we let $X$ be a compact Hausdorff space that
is non first-countable and also $\omega_1$-free, and we intend to reach
a contradiction by the end of this section.

\begin{lemma}
Our space $X$ has countable tightness.  
\end{lemma}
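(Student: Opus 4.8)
The plan is to argue by contradiction using the characterization of countable tightness given in the excerpt: a compact space has countable tightness if and only if it contains no free $\omega_1$-sequence. So suppose $X$ has a free $\omega_1$-sequence $\omegaoneseq{x}$, meaning that for every $\alpha$ the closures of $\{x_\beta:\beta<\alpha\}$ and $\{x_\beta:\beta\ge\alpha\}$ are disjoint. Since $X$ is $\omega_1$-free by hypothesis, the strategy is to manufacture from this free sequence a \emph{non-trivial converging} $\omega_1$-sequence, which contradicts $\omega_1$-freeness. The whole difficulty is that a free sequence is in some sense the opposite of a converging sequence, so I cannot use it directly; instead I expect to extract convergence from the rich structure that $\PFA$ provides.

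First I would exploit compactness to find accumulation points. For each $\alpha$, let $F_\alpha=\cl\{x_\beta:\beta\ge\alpha\}$; these form a decreasing family of closed sets with the finite intersection property, so their intersection $K=\bigcap_{\alpha<\omega_1}F_\alpha$ is a nonempty closed set. The freeness condition guarantees that no initial segment accumulates into the tails, which should let me choose accumulation points in a coherent way. Concretely, I would look for a single point $x$ that is an accumulation point of the sequence and try to thin out the sequence to converge to it. The obstacle here is that a free sequence typically has \emph{many} accumulation points and a priori converges to none of them, so naive thinning will not produce convergence.

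The substantive step, and the one where I expect $\PFA$ to enter, is to use a proper forcing (or equivalently a suitable partial order together with a family of $\aleph_1$ dense sets) that adds a cofinal subset $A\subseteq\omega_1$ for which $\langle x_\alpha:\alpha\in A\rangle$ converges to some chosen point $x\in K$. The natural poset consists of finite approximations to such a convergent subsequence, with the ordering designed so that a master condition for a countable elementary submodel forces the generic subsequence to eventually enter every basic neighbourhood of~$x$. I would need to verify that this forcing is proper — most likely by showing it is \emph{strategically closed} or proper via an elementary-submodel argument — and that for each neighbourhood $U$ of $x$ the set of conditions whose generic object is eventually inside $U$ is dense. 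Applying $\PFA$ to these $\aleph_1$ many dense sets yields the desired converging $\omega_1$-sequence.

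The main obstacle will be the properness verification together with ensuring the generic subsequence remains of size $\aleph_1$ (uncountable), since a converging $\omega_1$-sequence must be uncountable and non-trivial. Guaranteeing that the generic filter meets cofinally many levels — rather than collapsing the sequence to a countable convergent tail — is the delicate point, and I anticipate this is handled by building properness from countable elementary submodels $M\prec H(\theta)$ so that $M\cap\omega_1$ is a point past which conditions can always be extended while staying inside prescribed neighbourhoods. Once the forcing is shown proper and the relevant dense sets are identified, $\PFA$ delivers a filter meeting all of them, and the resulting cofinal $A\subseteq\omega_1$ gives a non-trivial converging $\omega_1$-sequence, contradicting that $X$ is $\omega_1$-free; hence no free $\omega_1$-sequence exists and $X$ has countable tightness.
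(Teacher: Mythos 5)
Your overall shape is right---assume a free $\omega_1$-sequence exists, turn it into a converging $\omega_1$-sequence, and contradict $\omega_1$-freeness---but the substantive step is left entirely unexecuted, and the paper does not need any of the machinery you propose. The paper simply cites the theorem of Juh\'asz and Szentmikl\'ossy (\emph{Convergent free sequences in compact spaces}, Proc.\ Amer.\ Math.\ Soc.\ 116 (1992)): in ZFC, every compact Hausdorff space of uncountable tightness contains a free $\omega_1$-sequence that \emph{converges}. That single citation finishes the lemma, with no forcing and no use of $\PFA$ at all.

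The gap in your version is that everything is deferred to ``the natural poset'' of finite approximations, whose properness and whose dense sets are never actually produced. Two concrete problems. First, to force $\langle x_\alpha:\alpha\in A\rangle$ to converge to $x$ you need, for each neighbourhood $U$ of $x$, a dense set guaranteeing the generic subsequence is eventually in $\cl U$; at this stage of the argument nothing bounds the character of $x$ (the space has not yet been cut down to weight $\le\aleph_1$ --- that happens only after the later lemmas), so you have no right to assume only $\aleph_1$ many dense sets are needed, and $\PFA$ does not apply to larger families. Second, the tension you yourself flag --- keeping the generic set cofinal in $\omega_1$ while simultaneously pushing tails into every prescribed neighbourhood --- is exactly where a naive finite-condition poset can fail to be proper or can fail the relevant density, and asserting that ``a master condition argument'' will handle it is not a proof. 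Since the needed conclusion is a known ZFC theorem, the right move is to invoke it rather than to reconstruct it by forcing.
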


\begin{proof}
This is established by Juh\'asz and Szentmikl\'ossy in~\cite{MR1137223}:  
a compact space of uncountable tightness contains (even) a \emph{free} 
$\omega_1$-sequence that converges.
\end{proof}

\begin{lemma}
We may assume, without loss of generality, that $X$ is separable.  
\end{lemma}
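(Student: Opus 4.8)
The plan is to reduce the general case to a separable one by finding a separable, closed (hence compact) subspace of $X$ that still fails to be first-countable while remaining $\omega_1$-free. The point at which first-countability fails gives us a point $p$ with no countable local base; I would like to build a separable closed subspace in which $p$ still witnesses non-first-countability. First I would use the fact, established in the previous lemma, that $X$ has countable tightness. Countable tightness is the key tool: it guarantees that closures of arbitrary sets are governed by their countable subsets, so that a separable subspace can be ``closed off'' without its closure swallowing uncontrollably much of $X$.

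The construction I would carry out is a standard elementary-submodel or closing-off argument. Fix a point $p$ at which $X$ is not first-countable. I would then build, by recursion of length $\omega_1$, an increasing chain of countable sets $\langle D_\alpha : \alpha \in \omega_1\rangle$ together with the closure $Y = \cl\bigl(\bigcup_{\alpha} D_\alpha\bigr)$, arranging that $p \in Y$ and that the local character of $p$ in $Y$ is still uncountable. Concretely, at each stage I would throw into $D_{\alpha+1}$ witnesses showing that no countable family of the neighbourhoods seen so far can form a base at $p$; countable tightness ensures that a cofinal-in-$\omega_1$ amount of this information is captured by the countable union. The resulting $Y$ is separable (witnessed by $\bigcup_\alpha D_\alpha$) and closed, hence compact, and by construction $p$ has no countable neighbourhood base in $Y$, so $Y$ is not first-countable.

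It remains to observe that passing to the closed subspace $Y$ preserves $\omega_1$-freeness, and this is immediate: a non-trivial converging $\omega_1$-sequence in $Y$ would, since $Y$ is a subspace, be such a sequence in $X$ as well, contradicting our blanket assumption that $X$ is $\omega_1$-free. Thus $Y$ is a separable, compact, non-first-countable, $\omega_1$-free space, and we may replace $X$ by $Y$ without loss of generality.

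The main obstacle I anticipate is the recursion that keeps the local character of $p$ uncountable in $Y$ while holding $Y$ separable. The danger is that in closing off to obtain compactness one inadvertently adds a countable local base at $p$. Countable tightness is exactly what rules this out, but the bookkeeping must be done carefully: at each countable stage one has only countably many candidate neighbourhoods to defeat, and one must ensure that the $\omega_1$-indexed diagonalization genuinely prevents any countable subfamily of the final neighbourhood trace from being a base. Making this diagonalization precise -- most cleanly via a continuous $\in$-chain of countable elementary submodels of a large $H(\theta)$, with $Y$ the closure of the union of their intersections with a fixed dense set -- is the technical heart of the argument.
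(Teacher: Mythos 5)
The paper itself disposes of this lemma by citation: the statement ``a compact $\omega_1$-free space that is not first-countable contains a closed separable subspace that is not first-countable'' is Proposition~2.2-type material from the authors' earlier paper \cite{MR3194414}, and no construction is given here. Your attempt to actually carry out the construction has a genuine gap, and in fact two. First, a bookkeeping problem: you run the diagonalization for $\omega_1$ stages and then declare $Y=\cl\bigl(\bigcup_{\alpha\in\omega_1}D_\alpha\bigr)$ separable ``witnessed by $\bigcup_\alpha D_\alpha$'', but an increasing $\omega_1$-chain of countable sets has union of size $\aleph_1$ unless it is eventually constant, so this set does not witness separability. If instead you stop at a countable stage, you have defeated only countably many candidate local bases at $p$, while $Y$ has continuum many countable families of relatively open sets that could serve as a base at $p$; nothing in the closing-off argument prevents one of the undefeated ones from working.

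Second, and more fundamentally: you use $\omega_1$-freeness only in the trivial direction (``a converging $\omega_1$-sequence in $Y$ would be one in $X$''), so your argument, if it worked, would prove that \emph{every} compact, countably tight, non-first-countable space has a closed separable non-first-countable subspace. That statement is false. Take the one-point compactification of a discrete set of cardinality $\aleph_1$: it is compact, countably tight, and not first-countable at the point at infinity, yet every closed separable subspace is countable and second countable (a countable set of isolated points together with the point at infinity, which there has a countable base of cofinite sets). Of course this space is not $\omega_1$-free --- any injective $\omega_1$-sequence of isolated points converges to the point at infinity --- which is exactly the point: $\omega_1$-freeness must enter the construction of $Y$ itself, not merely be checked to pass to $Y$ afterwards. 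The argument in \cite{MR3194414} is organized accordingly: one assumes that every closed separable subspace is first-countable and then, using a chain of elementary submodels $\omegaoneseq{M}$, exploits the fact that $p$ is a $G_\delta$-point in each $\cl(X\cap M_\alpha)$ but not in $X$ to select points $y_\alpha\notin\cl(X\cap M_\alpha)$ forming a non-trivial $\omega_1$-sequence converging to~$p$, contradicting $\omega_1$-freeness. Your sketch is missing this mechanism, and no refinement of the bookkeeping around countable tightness alone can supply it.
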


\begin{proof}
This was established by the present authors in~\cite{MR3194414}:
a compact $\omega_1$-free space that is not first-countable contains
a closed separable subspace that is not first-countable.  
\end{proof}

Henceforth we add separability to the assumptions on our space~$X$.

\begin{lemma}\label{lemma:card-cee}
Our space $X$ has cardinality at most~$\cee$.  
\end{lemma}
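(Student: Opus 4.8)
The plan is to bound the cardinality of $X$ by combining separability with countable tightness. We are given that $X$ is compact, separable, and has countable tightness. Let $D$ be a countable dense subset of $X$. The key observation is that in a space of countable tightness, the closure operator is determined by countable subsets: every point of $X$ lies in the closure of some countable subset of $D$. So the plan is to show that each point is captured by a countable ``witness'' drawn from $D$, and then count the possible witnesses together with the possible limits they determine.

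First I would fix a countable dense set $D$ and note that for each $x\in X$, countable tightness gives a countable set $C_x\subseteq D$ with $x\in\cl C_x$. This already shows $X=\bigcup\{\cl C:C\in[D]^{\le\omega}\}$, so it suffices to bound the number of countable subsets of $D$ and the number of points each such closure can contribute. The number of countable subsets of a countable set is $[\omega]^{\le\omega}$, which has cardinality $\cee$, so there are at most $\cee$ candidate sets $C$. The remaining task is to bound $|\cl C|$ for a fixed countable $C$.

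The main obstacle, then, is controlling the size of the closure of a single countable set. Here I would invoke compactness and countable tightness together: a convenient route is to observe that $\cl C$ is itself a compact space of countable tightness in which $C$ is dense, so it suffices to bound the cardinality of a compact, separable, countably tight space. One standard way is to count via points' neighbourhood behaviour relative to the countable dense set — using countable tightness, distinct points are separated by the countable family of basic relationships to $D$, which limits the number of points to at most $\cee$ (this is essentially the Arhangel'skii-style bound $|X|\le 2^{t(X)\cdot d(X)}=2^{\omega}=\cee$ for such spaces). I expect this counting step to be where the real work lies, since one must argue carefully that the ``trace'' of each point on the countable dense set determines the point, or at least determines it up to a set of size $\cee$.

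Assembling the pieces, $X$ is covered by at most $\cee$ closures $\cl C$, each of cardinality at most $\cee$, so $|X|\le\cee\cdot\cee=\cee$, as desired. I would note that $\PFA$ itself is not strictly needed for this cardinality bound beyond what is already encoded in the standing assumptions (separability and countable tightness); the bound is a general fact about compact separable spaces of countable tightness. The delicate point to get right in a full write-up is the precise mechanism by which countable tightness lets one recover each point from countable data, ensuring the final multiplication of cardinals stays at $\cee$.
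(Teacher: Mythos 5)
There is a genuine gap at exactly the point you flag as ``where the real work lies'': the counting step does not go through, because the inequality you appeal to, $|X|\le 2^{t(X)\cdot d(X)}$, is not a theorem of ZFC. The Arhangel'skii-type bounds that do hold all involve the pseudocharacter, e.g.\ $|X|\le 2^{L(X)\cdot t(X)\cdot\psi(X)}$, and in a compact Hausdorff space pseudocharacter equals character; since our $X$ is by standing assumption \emph{not} first-countable, $\psi(X)$ is uncountable and these bounds give nothing. Worse, the statement ``compact, separable, countably tight implies cardinality at most $\cee$'' is consistently false: under $\diamondsuit$, Fedorchuk constructed a compact hereditarily separable (hence separable and countably tight) space of cardinality $2^{\omega_1}>\cee$. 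Your reduction to bounding $|\cl C|$ for a single countable $C$ is also circular, since $\cl C$ is again just a compact separable countably tight space, so no progress has been made.

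The missing ingredient is precisely the one place where $\PFA$ enters, and it is what the paper's proof uses: by Balogh's theorem, $\PFA$ implies that every compact Hausdorff space of countable tightness is \emph{sequential}. Once $X$ is sequential, it equals the sequential closure of the countable dense set $D$, and that closure is built in at most $\omega_1$ stages, each stage adding at most $\cee^{\omega}=\cee$ limits of convergent sequences; hence $|X|\le\aleph_1\cdot\cee=\cee$. So your closing remark that $\PFA$ is not really needed for this lemma is the opposite of the truth: sequentiality, which is exactly where $\PFA$ is used, is what makes the counting work.
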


\begin{proof}
In~\cite{MR930252} Balogh proved that $\PFA$ implies all compact
Hausdorff spaces of countable tightness are in fact sequential.

Now apply the elementary fact that the sequential closure of a countable
set has cardinality at most~$\cee$.
\end{proof}

Our next step is to investigate what happens to our space~$X$ when
we force with the partial order~$\Fn(\omega_1,2,\aleph_1)$: 
the set of countable partial functions from~$\omega_1$ to~$2$
(notation as in~\cite{MR597342}).

The reason for this is that the main steps towards the contradiction
involve proper partial orders that are built in two stages,
the first of which is the aforementioned partial 
order~$\Fn(\omega_1,2,\aleph_1)$;
this partial order is proper and forces $\CH$ to hold.

\begin{lemma}
Upon forcing with $\Fn(\omega_1,2,\aleph_1)$
our space~$X$ remains compact.  
\end{lemma}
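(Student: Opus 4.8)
The plan is to reduce the statement to a finite‑intersection property for \emph{ground‑model} closed sets. In $V[G]$ every open set is, by definition of the topology, a union of members of a ground‑model base $\mathcal B\in V$, so an arbitrary open cover refines to a cover by basic open sets drawn from $\mathcal B$; passing to complements, $X$ is compact in $V[G]$ exactly when every family of ground‑model closed sets with the finite‑intersection property (FIP) has nonempty intersection. Note that whether a \emph{finite} subfamily has empty intersection is absolute, since finite intersections are computed inside the unchanged ground‑model collection of closed sets. Thus the only danger is a \emph{new} subfamily that the forcing adds.

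The first lever is $\sigma$‑closure. Because $\Fn(\omega_1,2,\aleph_1)$ is countably closed it adds no new $\omega$‑sequences of ground‑model elements; hence any countable subfamily of ground‑model closed sets, once enumerated, already lies in $V$, and by the compactness of $X$ in $V$ it has nonempty intersection. Consequently $X$ is \emph{countably compact} in $V[G]$. The same mechanism shows that sequential compactness is preserved outright: any sequence in $X$ is a ground‑model sequence, the convergent subsequence furnished by $V$ (recall $X$ is sequential by Balogh, used in Lemma~\ref{lemma:card-cee}) is again a ground‑model sequence, and convergence of a fixed sequence to a fixed point is absolute. Finally, since the forcing forces $\CH$ and $|X|\le\cee$ by Lemma~\ref{lemma:card-cee}, in $V[G]$ every cover has size at most $\aleph_1$; therefore full compactness reduces to showing that $X$ is \emph{initially $\omega_1$‑compact} in $V[G]$.

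The heart of the matter is to rule out a single bad family $\omegaoneseq{F}$ of ground‑model closed sets with FIP and empty intersection. The plan is to pass to the decreasing chain $G_\beta=\bigcap_{\alpha<\beta}F_\alpha$: each $G_\beta$ is again a ground‑model closed set (a countable intersection, by $\sigma$‑closure), each is nonempty by the countable compactness just established, and $\bigcap_{\beta}G_\beta=\emptyset$. Since $X$ is compact in $V$, no such strictly decreasing $\omega_1$‑chain of nonempty closed sets exists there, so the chain is a genuinely new object and the contradiction must be produced back in~$V$. I would obtain it by a reflection argument: take a countable $M\prec H_\theta$ containing $X$, $\mathcal B$, and a name for the chain, use that $\Fn(\omega_1,2,\aleph_1)$ is $\sigma$‑closed to build a decreasing $\omega$‑chain of conditions through the dense sets of $M$ and hence an $(M,\Fn(\omega_1,2,\aleph_1))$‑generic master condition, and read off from it a configuration already living in~$V$. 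Using sequentiality to express the relevant points as limits of (absolute) convergent sequences, one expects this reflected configuration to be either a \emph{free} $\omega_1$‑sequence, contradicting the countable tightness of $X$, or a nontrivial \emph{converging} $\omega_1$‑sequence, contradicting that $X$ is $\omega_1$‑free. I expect precisely this last step to be the main obstacle: showing that $\Fn(\omega_1,2,\aleph_1)$ cannot add a new decreasing $\omega_1$‑chain of nonempty ground‑model closed sets with empty intersection in a compact, countably tight space — equivalently, that any non‑compactness witnessed in $V[G]$ reflects to a free or convergent $\omega_1$‑sequence already present in~$V$.
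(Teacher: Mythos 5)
Your reduction is set up correctly and its first half matches the paper: compactness in the extension amounts to the finite-intersection property for families of ground-model closed sets, and $\sigma$-closure of $\Fn(\omega_1,2,\aleph_1)$ handles all countable families, so $X$ stays countably compact (this is exactly the paper's first paragraph, phrased via accumulation points). The further reduction to initial $\omega_1$-compactness via $\CH$ and $|X|\le\cee$ is a correct observation, though the paper does not need it. But the proof stops exactly where the real work begins, and you say so yourself: you have no argument that a decreasing $\omega_1$-chain of nonempty ground-model closed sets with empty intersection cannot appear in $V[G]$. The route you sketch for that step --- a master condition for a countable $M\prec H_\theta$ over the $\sigma$-closed forcing alone --- does not plausibly close the gap: an $(M,\Fn(\omega_1,2,\aleph_1))$-generic condition only decides the countable trace $M\cap\omega_1$ of the chain, and a countable initial segment of a decreasing chain of nonempty closed sets carries no contradiction in a compact space; nothing forces the reflected configuration to assemble into an uncountable free or convergent sequence in $V$.

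The paper's actual mechanism is different and is the one missing idea: it forces \emph{again}, over $V[G]$. Since $X$ is countably compact but (assumed) not compact in $V[G]$, Dow's Proposition~5.8 from \cite{MR1031969} supplies a \emph{proper} poset $\PP$ in $V[G]$ that introduces an embedding $e$ of the ordinal space $\omega_1$ into $X$, together with ground-model open sets $U_\alpha$ separating $e[\alpha+1]$ from $e\bigl[[\alpha+1,\omega_1)\bigr]$. One then applies $\PFA$ to the two-step iteration $\Fn(\omega_1,2,\aleph_1)*\dot\PP$ with $\aleph_1$ many dense sets (deciding each value $e(\alpha)$ and each $U_\alpha$) to obtain, already in $V$, a free $\omega_1$-sequence in $X$; this contradicts the countable tightness established earlier via Juh\'asz--Szentmikl\'ossy. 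This ``compose with a further proper forcing and apply $\PFA$ to the iteration'' template is the engine of every step of the paper, and it is precisely what your appeal to sequentiality and a single master condition cannot replace. As a secondary point, your phrase ``no such strictly decreasing $\omega_1$-chain of nonempty closed sets exists'' in a compact space is false as literally stated (only chains with \emph{empty intersection} are excluded), though that is not where the argument founders.
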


\begin{proof}
By ``our space~$X$ remains compact'' we mean that the set $X$, with the
topology generated by the ground-model open sets, is a compact space.

To begin: our space~$X$ does remain countably compact.
This is so because every countably infinite subset of~$X$ belongs to the
ground model and hence has an accumulation point there.
Because the old sets form a base for the new topology that point remains
an accumulation point.

Now, if $X$ is no longer compact in this extension then 
\cite{MR1031969}*{Proposition~5.8} applies and there is a proper partial
order~$\PP$ that introduces an embedding~$e$ of the ordinal space~$\omega_1$
into~$X$.
We shall specify $\aleph_1$ many dense sets in the partial order
$\Fn(\omega_1,2,\aleph_1)*\dot\PP$ to determine a map
$f:\omega_1\to X$, in our ground model, that is a free sequence in~$X$.

First we take, for each~$\alpha$, the set 
$E_\alpha=\{p:(\exists x\in X)(p\forces \dot e(\alpha)=x)\}$.
An application of $\PFA$ to this family yields a map $f:\omega_1\to X$.

Next we note that in the extension the set $e[\alpha+1]$ is compact and 
does not meet the closure of~$e\bigl[[\alpha+1,\omega_1)\bigr]$.
Therefore there is an open set~$U_\alpha$ in the ground model topology
with the property that $e[\alpha+1]\subseteq U_\alpha$ and
$\cl U_\alpha\cap e\bigl[[\alpha+1,\omega_1)\bigr]=\emptyset$.
As above we get dense sets~$D_\alpha$ that determine the 
sequence~$\omegaoneseq{U}$.

Now apply $\PFA$ to the union of the families to get the desired
free sequence $f:\omega_1\to X$.

This contradicts the countable tightness of~$X$ and establishes the lemma.
\end{proof}

If we combine this with Lemma~\ref{lemma:card-cee} then we know that in
the extension by $\Fn(\omega_1,2,\aleph_1)$ our space~$X$ has become a compact
space of cardinality and weight at most~$\aleph_1$.
This will be used in the next step.

From~\cite{MR3194414} we quote (and modify) the notion ultra-Fr\'echet, 
a useful technical property that will be used in our proof.

\begin{definition}\label{def.u-F}
A space $Z$ will be said to be \emph{ultra-Fr\'echet} if it has countable 
tightness and for each countable subset~$D$ of~$Z$ and each free 
ultrafilter $\calU$ on~$D$ that converges in~$Z$ there is a countable 
subfamily~$\calU'$ of~$\calU$ with the property that every infinite 
pseudointersection of~$\calU'$ converges.
\end{definition}

This is a weakening of the original definition, which did not require
that the ultrafilter~$\calU$ be convergent and thus would imply that
the space is countably compact.
In the present context of compact spaces there is no difference but the
notion may be useful outside this class.

We also remark that all pseudointersections of the countable subfamily
converge to the same point (the union of two pseudointersections is again
a pseudointersection), to wit the limit of the ultrafilter.

\begin{lemma}
Our space~$X$ is ultra-Fr\'echet, also upon forcing 
with~$\Fn(\omega_1,2,\aleph_1)$.  
\end{lemma}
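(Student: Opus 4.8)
The plan is to reduce ultra-Fr\'echetness to a purely combinatorial property of the traces of neighbourhood filters, to observe that this property is absolute between the ground model and the $\Fn(\omega_1,2,\aleph_1)$-extension, and then to verify it using~$\PFA$ on pain of producing a converging $\omega_1$-sequence. I would fix a countable $D\subseteq X$ and a free ultrafilter $\calU$ on~$D$ converging to a point~$x$, and let $\mathcal F=\{U\cap D:U\ni x\text{ open}\}$ be the trace on~$D$ of the neighbourhood filter of~$x$; since $\calU$ converges to~$x$ we have $\mathcal F\subseteq\calU$. The key reduction is that a countable $\calU'\subseteq\calU$ has the property that every infinite pseudointersection of~$\calU'$ converges to~$x$ \emph{if and only if} $\calU'$ is a $\subseteq^*$-base for~$\mathcal F$, i.e.\ for every open $U\ni x$ some member of~$\calU'$ is almost contained in $U\cap D$. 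One direction is immediate; for the other, given a $\subseteq^*$-descending $\calU'=\{A_n:n<\omega\}$ that is not cofinal one picks the offending open~$U$ and distinct points $d_n\in A_n\setminus U$, obtaining a pseudointersection $\{d_n:n<\omega\}\subseteq D\setminus U$ that cannot converge to~$x$. Since the base may be taken to consist of traces, ultra-Fr\'echetness at $(D,\calU,x)$ becomes a condition on the pair $(D,x)$ alone: the trace filter~$\mathcal F$ has a countable $\subseteq^*$-base.

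Next I would note that every ingredient of this condition is absolute between $V$ and the extension by $\Fn(\omega_1,2,\aleph_1)$. That forcing is countably closed, so it adds no new subsets of the countable set~$D$ and no new countable families of ground-model sets; the extension topology is generated by the ground-model open sets, so $\cl D$, the traces $U\cap D$, convergence of subsets of~$D$ to~$x$, and the existence of a countable $\subseteq^*$-base for~$\mathcal F$ are all unchanged. Hence the property holds in~$V$ exactly when it holds after forcing with $\Fn(\omega_1,2,\aleph_1)$. Moreover any new ultrafilter in the extension still converges to a ground-model point of~$\cl D$ and still sits above the same trace filter, so no new instances arise. This equivalence reduces the whole lemma to verifying the condition in the extension, where $\CH$ holds and, by the remarks following the previous lemma, $X$ is compact of weight at most~$\aleph_1$.

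Working in the extension, I would argue by contradiction: suppose some trace filter~$\mathcal F$ has no countable $\subseteq^*$-base. As the weight of~$X$ is at most~$\aleph_1$, $\mathcal F$ has a $\subseteq$-base $\langle B_\xi:\xi<\omega_1\rangle$, and the hypothesis says no countable subfamily is $\subseteq^*$-cofinal. I would then set up a proper two-stage forcing $\Fn(\omega_1,2,\aleph_1)*\dot\PP$ whose second factor generically builds a $\subseteq^*$-descending cofinal $\omega_1$-tower in~$\mathcal F$ together with a transversal $\omegaoneseq{d}$ of distinct points, $d_\alpha$ drawn from the $\alpha$-th tower member. For each~$\xi$ the requirement ``some tower member is almost inside~$B_\xi$'' is dense, and for each $\gamma<\omega_1$ so is ``the tower has length past~$\gamma$''. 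Applying $\PFA$ in~$V$ to these $\aleph_1$ dense sets yields, back in~$V$, an injective sequence $\omegaoneseq{d}$ in~$D$; by cofinality of the tower and the regularity of~$X$ a tail lies in~$\cl U$ for every open $U\ni x$, so $\omegaoneseq{d}$ converges to~$x$, contradicting the $\omega_1$-freeness of~$X$.

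The main obstacle is precisely the limit stages of this construction. Since $\mathcal F$ need not be a P-filter, a countable $\subseteq^*$-descending sequence from~$\mathcal F$ may have no infinite pseudointersection inside~$\mathcal F$, so one cannot simply continue the tower at limits. Overcoming this is where the strength of~$\PFA$, rather than mere countable closure, is needed: the second-stage forcing must be arranged --- using the ultrafilter~$\calU$ to navigate the limits and keep the generic points inside all finitely many current neighbourhood-commitments --- so that it remains proper while the $\aleph_1$ density requirements force both cofinality and convergence. This follows the same pattern as the preceding lemma, where a proper two-stage forcing was combined with~$\PFA$ to extract a ground-model free sequence, and I expect the verification of properness at limits to be the technical heart of the argument.
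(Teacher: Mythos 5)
Your opening reduction and the absoluteness discussion are basically sound: since $\Fn(\omega_1,2,\aleph_1)$ adds no new countable sets and the new topology is generated by the ground-model open sets, badness of a pair $\orpr{D}{\calU}$ transfers between the two models (the paper makes the same observation, in the one direction it needs). But there are two problems. The smaller one is that your key equivalence only goes one way: a countable $\calU'\subseteq\calU$ all of whose infinite pseudointersections converge is indeed $\subseteq^*$-cofinal in the trace filter $\mathcal{F}$, but nothing forces $\mathcal{F}$ itself to have a countable $\subseteq^*$-cofinal subfamily consisting of \emph{traces}; so ``$\mathcal{F}$ has a countable $\subseteq^*$-base'' is a strictly stronger property than ultra-Fr\'echetness at $(D,\calU,x)$, and it is not clear that this stronger property is what the hypotheses deliver. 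The serious problem is that the entire technical content of the lemma sits in the step you defer: you never define the second-stage poset that is to build the cofinal tower together with its transversal, and you yourself identify properness at limit stages (where $\mathcal{F}$ need not be a P-filter) as ``the technical heart of the argument''. That is precisely the part that cannot be taken on faith, so as written the proposal does not constitute a proof.

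The paper takes a route that avoids inventing and verifying any new proper poset. Working in the extension, where $\CH$ holds and $\calU$ can be enumerated in type $\omega_1$, it runs a transparent recursion: at stage $\alpha$ the assumed failure of ultra-Fr\'echetness yields a non-convergent infinite pseudointersection $c_\alpha$ of $\{U_\beta:\beta<\alpha\}\cup\{D\cap O_\beta:\beta<\alpha\}$, and sequentiality (Balogh's theorem) extracts an infinite $a_\alpha\subseteq c_\alpha$ converging to some $y_\alpha\neq x$; the separating neighbourhoods $O_\beta$ make $\{a_\alpha:\alpha\in\omega_1\}$ almost disjoint. It then quotes a theorem of Dow and Shelah giving a \emph{ccc} poset that refines this family to a Luzin family, applies $\PFA$ to $\Fn(\omega_1,2,\aleph_1)*\dot\Q$ with $\aleph_1$ dense sets to pull everything into the ground model, and shows by a direct counting argument that the Luzin property forces the sequence of limits $\omegaoneseq{y}$ to converge --- the forbidden $\omega_1$-sequence. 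To complete your route you would either have to prove properness of your tower-with-transversal forcing from scratch, or reshape your ``bad'' data into an almost disjoint family of convergent sequences so that the cited Luzin-family result can do the forcing work for you.
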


\begin{proof}
We argue by contradiction and assume that in the extension there are a 
countable set~$D$ and an ultrafilter~$\calU$ on~$D$ where the property fails; 
that is: every countable subfamily of~$\calU$ has an infinite 
pseudointersection that does not converge.

This will also cover the ground model case: because the forcing adds no new 
countable sets a `bad' pair $\orpr D\calU$ from the ground model is still
`bad' in the extension.

We note that by sequentiality of~$X$ every countably infinite subset of~$X$
contains an infinite subset that converges: if the set is closed then
it is compact metrizable and hence contains a convergent sequence, and
if it is not closed then sequentiality gives us a sequence in the set
that converges to a point outside the set.

We now construct, recursively, an almost disjoint family 
$\{a_\alpha:\alpha\in\omega_1\}$ of infinite subsets of~$D$ that converge,
say $a_\alpha$~converges to~$y_\alpha$, where $y_\alpha\neq x$.
Note that none of the~$a_\alpha$s can belong to~$\calU$: the countable,
indeed singleton, family~$\{a_\alpha\}$ would contradict our assumption.
We also choose, for each~$\alpha$, a neighbourhood~$O_\alpha$ of~$x$ such 
that~$y_\alpha\notin\cl O_\alpha$.

We enumerate $\calU$ as $\omegaoneseq{U}$.

At stage $\alpha$ we let $c_\alpha$ be an infinite pseudointersection
of $\{U_\beta:\beta<\alpha\}\cup\{D\cap O_\beta:\beta<\alpha\}$
that does not converge.  

As noted above $c_\alpha$ has an infinite subset that converges; 
if every infinite convergent subset of~$c_\alpha$ were to converge to~$x$
then by sequentiality the sequential closure of~$c_\alpha$ would be 
just~$c_\alpha\cup\{x\}$, and $c_\alpha$ would converge after all.
So there is an infinite convergent subset~$a_\alpha$ of~$c_\alpha$ that
converges to some point~$y_\alpha$ distinct from~$x$.

The family $\{a_\alpha:\alpha\in\omega_1\}$ and the ultrafilter~$\calU$ satisfy
the conditions of~\cite{MR2897749}*{Lemma~1.8}.
Therefore there is a ccc partial order~$\Q$ that introduces an uncountable
subset~$I$ of~$\omega_1$ such that $\{a_\alpha:\alpha\in I\}$
is a Luzin family.
This means that for every $\alpha\in I$ and every natural number~$n$
the set $F(\alpha,n)=\{\beta\in I\cap\alpha:a_\beta\cap a_\alpha\subseteq n\}$ 
is finite.

It takes only $\aleph_1$ many dense sets in the partial order
$\Fn(\omega_1,2,\aleph_1)*\dot\Q$ to determine such an uncountable
set~$I$, the function $\alpha\mapsto a_\alpha\cup\{y_\alpha\}$ and the finite 
sets~$F(\alpha,n)$.
Therefore an application of~$\PFA$ yields the existence of such sets
in the ground model, and we may as well assume $I=\omega_1$.

The sequence $\omegaoneseq{y}$ converges, in the ground model.
Indeed, let $K=\bigcap_\alpha\cl\{y_\beta:\beta\ge\alpha\}$.
By compactness $K$~is non-empty of course and by the Luzin property
of the family $\{a_\alpha:\alpha\in\omega_1\}$ it consists of just one point
and $\omegaoneseq{y}$ converges to that point, which contradicts
our standing assumption on~$X$.

To see this, let $x\in K$ and let $O$ be an arbitrary neighbourhood of~$x$.
The set $A=\{\alpha:y_\alpha\in O\}$ is uncountable, and for each
$\alpha\in A$ there is an~$m_\alpha\in\omega$ such 
that $a_\alpha\setminus O\subseteq m_\alpha$.
We may assume that there is a single~$m$ such that $m_\alpha=m$ 
for all~$\alpha\in A$.

We claim that $\{\beta:y_\beta\notin\cl O\}$ is countable; by regularity of~$X$
this suffices to prove that $\omegaoneseq{y}$ converges to~$x$.

If the set is uncountable then, by the same argument as above, 
we find an uncountable set~$B$ and 
an~$n\ge m$ such that $a_\beta\cap O\subseteq n$ for all~$\beta\in B$.
But now take $\alpha\in A$ such that $\alpha\cap B$ is infinite.
Then $a_\beta\cap a_\alpha\subseteq n$ for all $\beta\in \alpha\cap B$,
contradicting the Luzin property of our family.
\end{proof}

We can now take the last step towards our contradiction.
To recapitulate: 
we assume our space~$X$ is compact, separable, not first-countable 
and~$\omega_1$-free.
From this we deduced that $X$~remains compact after forcing with
the partial order~$\Fn(\omega_1,2,\aleph_1)$, and that it is
ultra-Fr\'echet before and after this forcing.
In addition, its separability and non-first-countability are preserved
by~$\Fn(\omega_1,2,\aleph_1)$ too.

\begin{lemma}
The assumptions that $X$ is compact, separable,
not first-countable and ultra-Fr\'echet,
before and after forcing with~$\Fn(\omega_1,2,\aleph_1)$, imply that
$X$~has a converging $\omega_1$-sequence.  
\end{lemma}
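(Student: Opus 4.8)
The plan is to carry out, now constructively, the endgame of the preceding (ultra-Fr\'echet) lemma. I first pass to the extension by~$\Fn(\omega_1,2,\aleph_1)$, where $\CH$ holds and, by the facts already assembled, $X$ is compact, separable, sequential and ultra-Fr\'echet, has weight at most~$\aleph_1$, and is still not first-countable. Fix a point~$x$ of uncountable character (so, by the weight bound, of character exactly~$\aleph_1$) and a countable dense set~$D$; since the forcing adds no new countable sets, $D$ and all of its countable subsets already lie in the ground model. Working in this model I shall build an almost disjoint family of convergent sequences inside~$D$ whose limits are pairwise distinct, turn it into a Luzin family by a ccc forcing, and then hand the two-step iteration $\Fn(\omega_1,2,\aleph_1)*\dot\Q$ to~$\PFA$, exactly as before.

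For the scaffolding, note first that since $X$ is separable every isolated point lies in the countable dense set~$D$, so~$X$ has only countably many isolated points and, being uncountable, has $\aleph_1$ non-isolated points. As $X$ is compact each such point is the limit of a free ultrafilter on~$D$, and ultra-Fr\'echetness converts that ultrafilter into a sequence in~$D$ converging to the point. I may therefore recursively choose an almost disjoint family $\omegaoneseq{a}$ of infinite subsets of~$D$ with $a_\alpha$ converging to a non-isolated point~$y_\alpha$, the~$y_\alpha$ pairwise distinct and different from~$x$; at each stage $\CH$ keeps the (small) almost disjoint family built so far non-maximal, so a fresh~$a_\alpha$ aimed at a new limit can be extracted. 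Fixing a free ultrafilter~$\calU$ on~$D$ with limit~$x$, no~$a_\alpha$ belongs to~$\calU$, because $a_\alpha$ converges to $y_\alpha\neq x$. The pair $\bigl(\omegaoneseq{a},\calU\bigr)$ then satisfies the hypotheses of~\cite{MR2897749}*{Lemma~1.8}, which supplies a ccc partial order~$\dot\Q$ forcing an uncountable set $I\subseteq\omega_1$ for which $\{a_\alpha:\alpha\in I\}$ is a Luzin family.

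The reflection now follows the preceding lemma. Only $\aleph_1$ dense subsets of $\Fn(\omega_1,2,\aleph_1)*\dot\Q$ are needed to pin down~$I$, the assignment $\alpha\mapsto a_\alpha\cup\{y_\alpha\}$, and the finite sets $F(\alpha,n)$ witnessing the Luzin property, so~$\PFA$ returns all of this data in the ground model, where I may take $I=\omega_1$. Each~$a_\alpha$ and each~$y_\alpha$ is a ground-model object, and since the ground-model topology is coarser than the forced one the convergence $a_\alpha\to y_\alpha$ descends to it; thus, in the ground model, $\{a_\alpha:\alpha\in\omega_1\}$ is a Luzin family of sequences with $a_\alpha\to y_\alpha$. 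Then $K=\bigcap_\alpha\cl\{y_\beta:\beta\ge\alpha\}$ is non-empty by compactness and, by the same separation argument as before, the Luzin property forces $|K|=1$, so $\omegaoneseq{y}$ converges to the single point of~$K$. As the~$y_\alpha$ are distinct this is a non-trivial converging $\omega_1$-sequence, which is what the lemma claims.

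I expect the main obstacle to be twofold. First, one must verify that the family~$\omegaoneseq{a}$ and the ultrafilter~$\calU$ really meet the precise combinatorial hypotheses of~\cite{MR2897749}*{Lemma~1.8} that license the ccc Luzinification; the crucial input is the pairwise distinctness of the limits, and it is separability---forcing the isolated points into~$D$ and thereby leaving $\aleph_1$ non-isolated targets---that makes this achievable. Second, one must secure the scaffolding inside the $\CH$-extension in the first place: this is exactly where the persistence of compactness (so that ultrafilters on~$D$ still converge) and of ultra-Fr\'echetness after forcing with~$\Fn(\omega_1,2,\aleph_1)$ are spent, the convergence then descending to the ground model because its topology is coarser. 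Everything downstream is a faithful repetition of the Luzin-family and singleton-$K$ computation already carried out for the ultra-Fr\'echet lemma.
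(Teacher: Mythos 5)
There is a genuine gap at the central step: the appeal to \cite{MR2897749}*{Lemma~1.8}. That lemma does not apply to an arbitrary almost disjoint family of convergent sequences together with an arbitrary free ultrafilter that merely avoids the family; its hypotheses require the family to be tied to the ultrafilter by diagonalization --- in the preceding lemma of the paper each $a_\alpha$ is extracted from a pseudointersection $c_\alpha$ of $\{U_\beta:\beta<\alpha\}$, where $\omegaoneseq{U}$ enumerates $\calU$, and it is this concentration of the family on $\calU$ that licenses the ccc Luzinification. Without some such hypothesis the conclusion is simply false: in $X=[0,1]$ with $D=\Q\cap[0,1]$ one can choose an almost disjoint family of rational sequences converging to distinct irrationals that is countably separated by rational intervals, and a countably separated family has no uncountable Luzin subfamily in any extension, ccc or otherwise. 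Worse, in the present lemma the needed diagonalization is \emph{unattainable}: $X$ is now assumed ultra-Fr\'echet, so once $\alpha$ is large enough that $\{U_\beta:\beta<\alpha\}$ contains the countable subfamily $\calU'$ witnessing ultra-Fr\'echetness for $\calU$, every infinite pseudointersection of $\{U_\beta:\beta<\alpha\}$ converges to the limit of $\calU$, i.e.\ to $x$ itself --- which is exactly the situation your construction must avoid, since you need $a_\alpha\to y_\alpha\neq x$. In the previous lemma the non-convergent pseudointersections $c_\alpha$ existed precisely because ultra-Fr\'echetness was being assumed to \emph{fail}; here it holds, and the whole Luzin-family mechanism collapses.

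This is why the paper's proof of this lemma takes a genuinely different route: it builds, along a chain of elementary submodels $\omegaoneseq{M}$ (using $\CH$ in the $\Fn(\omega_1,2,\aleph_1)$-extension to ensure $\pow(D)\subseteq\bigcup_\alpha M_\alpha$), \emph{pairs} of disjoint sequences $a_\alpha\to z$ and $b_\alpha\to y_\alpha$ whose union generates an ultrafilter on $M_\alpha\cap\pow(D)$, so that the two $\omega_1$-sequences of sets are strongly non-$\sigma$-separated; it then invokes Todor\v{c}evi\'c's result (\cite{MR1711328}*{p.~145}), which provides a \emph{proper} (not ccc) partial order $\T$ yielding an uncountable $J$ with $(a_\alpha\cap b_\beta)\cup(a_\beta\cap b_\alpha)\neq\emptyset$ for distinct $\alpha,\beta\in J$, and finally derives $y_\alpha\to z$ from a pigeonhole argument on the finite sets $a_\alpha\setminus O$ and $b_\alpha\cap O$. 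Your reflection and ``coarser topology'' remarks are fine as far as they go, but the combinatorial engine you propose cannot be started under the hypotheses of this lemma; you would need to replace it with something like the two-sequence construction and the Todor\v{c}evi\'c unseparation result.
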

 
\begin{proof}
We work in the extension by~$\Fn(\omega_1,2,\aleph_1)$ and take a
sequence~$\omegaoneseq{M}$ of countable elementary substructures
of~$H(\kappa)$ for a suitable large cardinal~$\kappa$ such that 
\begin{itemize}
\item $X$ belongs to~$M_0$ 
\item $M_\alpha=\bigcup_{\beta<\alpha}M_\beta$ if $\alpha$ is a limit
\item $\langle M_\beta:\beta\le\alpha\rangle\in M_{\alpha+1}$ 
      and $M_\alpha\prec M_{\alpha+1}$ for
      all~$\alpha$
\end{itemize}
By elementarity there are $z\in M_0\cap X$ and $D\in M_0$ such that $X$~is 
not first-countable at~$z$ and $D$~is countable and dense in~$X$.
We fix such~$z$ and~$D$.

Elementarity also implies that $M_\alpha^\omega\in M_{\alpha+1}$ for all~$\alpha$
and this, combined with the Continuum Hypothesis, implies that every
countable subset of~$M=\bigcup_{\alpha<\omega_1}M_\alpha$ is a member of~$M$.
In particular $\pow(D)\subseteq M$.

At this point we could refer to~\cite{MR3194414}*{Proof of Proposition~2.8}, 
which produces sequences~$\omegaoneseq{a}$ and~$\omegaoneseq{b}$ of infinite 
subsets of~$D$ and a sequence~$\omegaoneseq{y}$ of points 
in~$X\setminus\{z\}$ with the following properties:
\begin{enumerate}
\item $a_\alpha$ converges to~$z$ 
\item $b_\alpha$ converges to~$y_\alpha$
\item $a_\alpha\cap b_\alpha=\emptyset$
\item $\{A:a_\alpha\cup b_\alpha\subseteq^* A\}$ is an ultrafilter
      in~$M_\alpha\cap\pow(D)$
\item $a_\alpha$, $b_\alpha$, and $y_\alpha$ belong to~$M_{\alpha+1}$
\end{enumerate}
However, for the benefit of the reader we will give a somewhat more streamlined 
construction than the one referred to above.

Fix an~$\alpha\in\omega_1$ and let $y_\alpha$ be a point in the intersection
$$
\bigcap\{U:U\in M_\alpha, z\in U, U \text{ is open in }X\}
$$
that is distinct from~$z$. 
Such a point exists because $\{z\}$~is not a $G_\delta$-set in~$X$.
Moreover, by elementarity we can assume $y_\alpha\in M_{\alpha+1}$.

Next observe that $y_\alpha\notin\cl A$ whenever $A\in M_\alpha\cap\pow(D)$
and $z\notin\cl A$.
There is an ultrafilter~$\calU_1$ on the Boolean algebra $M_\alpha\cap\pow(D)$
that extends the filter dual to $\{A:z\notin\cl A\}$ and is such that
$y_\alpha\in\bigcap\{\cl U:U\in\calU_1\}$; 
again we can assume $\calU_1\in M_{\alpha+1}$.
By the contrapositive of the first sentence of this paragraph we have
$z\in\bigcap\{\cl U:U\in\calU_1\}$.
Next we take ultrafilters $\calU_2$ and $\calU_3$ on~$D$ that extend~$\calU_1$
and converge to~$z$ and~$y_\alpha$ respectively, yet again we take these
ultrafilters in~$M_{\alpha+1}$.

Now apply the ultra-Fr\'echet property to $\calU_2$ and $y_\alpha$, and 
to~$\calU_3$ and~$z$ to find countable subfamilies $\calU_2'$ and $\calU_3'$
as in the definition, where we may assume that both families contain~$\calU_1$.
These choices can be made in~$M_{\alpha+1}$ and we can then take 
pseudointersections~$a_\alpha$ and~$b_\alpha$ of~$\calU_2'$ and~$\calU_3'$
respectively that belong to~$M_{\alpha+1}$.
By the remark after Definition~\ref{def.u-F} $a_\alpha$ converges to~$z$
and $b_\alpha$ converges to~$y_\alpha$, and we can 
assume $a_\alpha\cap b_\alpha=\emptyset$.

All five conditions are met: we took care of (1), (2), (3), and~(5)
explicitly; (4)~holds because $a_\alpha$ and $b_\alpha$ are both
pseudointersections of~$\calU_1$. 
 
Conditions (4) and (5) imply that the sequences~$\omegaoneseq{a}$ 
and~$\omegaoneseq{b}$ are \emph{not} $\sigma$-separated in a strong way:
whenever $\calA$~is a countable family of subsets of~$D$ there
is an~$\alpha$ such that for every~$A\in\calA$ either
$A\cap(a_\alpha\cup b_\alpha)$ is finite or $a_\alpha\cup b_\alpha\subseteq^*A$.

We may then apply a result of Todor\v{c}evi\'c, see~\cite{MR1711328}*{p.~145},
that implies there is a proper partial order~$\T$ that produces an uncountable
subset~$J$ of~$\omega_1$ such that
$(a_\alpha\cap b_\beta)\cup(a_\beta\cap b_\alpha)$ 
is non-empty
whenever $\alpha,\beta\in J$ are distinct.

As before we need $\aleph_1$ many dense sets in the proper partial order
$\Fn(\omega_1,2,\aleph_1)*\dot\T$ to determine the 
sequences $\omegaoneseq{a}$, $\omegaoneseq{b}$, and~$\omegaoneseq{y}$ 
as well as the uncountable set~$J$.
We apply $\PFA$ and obtain items with properties~(1), (2) and~(3)
in our list above as well as the uncountable set~$J$ from Todor\v{c}evi\'c's
result, which we may assume to be $\omega_1$ itself.

We claim that $\omegaoneseq{y}$ converges to~$z$, which is the final
contradiction that we seek.
Let $O$ be a neighbourhood of~$z$.
Because every $a_\alpha$ converges to~$z$ we know that $a_\alpha\setminus O$
is always finite.
We claim that $b_\alpha\cap O$ is finite for only countably many~$\alpha$,
and hence that there is an~$\alpha$ such that 
$y_\beta\in\cl O$ for~$\beta\ge\alpha$.

Indeed, assume $b_\alpha\cap O$ is finite for uncountably many~$\alpha$ and
fix two finite sets $F$ and~$G$ such that the set~$A$ of those~$\alpha$
for which $F=a_\alpha\setminus O$ and $G=b_\alpha\cap O$ is uncountable.
Now let $\alpha,\beta\in A$ be distinct.
Then $a_\alpha=F\cup(a_\alpha\cap O)=(a_\beta\setminus O)\cup(a_\alpha\cap O)$ 
and $b_\beta=G\cup(b_\beta\setminus O)=(b_\alpha\cap O)\cup(b_\beta\setminus O)$,
which implies $a_\alpha\cap b_\beta = \emptyset$.
Likewise $a_\beta\cap b_\alpha=\emptyset$
and we are done.
\end{proof}

\begin{bibdiv}
  
  \begin{biblist}

\bib{MR930252}{article}{
   author={Balogh, Zolt\'{a}n T.},
   title={On compact Hausdorff spaces of countable tightness},
   journal={Proc. Amer. Math. Soc.},
   volume={105},
   date={1989},
   number={3},
   pages={755--764},
   issn={0002-9939},
   review={\MR{930252}},
   doi={10.2307/2046929},
}

\bib{MR1031969}{article}{
   author={Dow, Alan},
   title={An introduction to applications of elementary submodels to
   topology},
   journal={Topology Proc.},
   volume={13},
   date={1988},
   number={1},
   pages={17--72},
   issn={0146-4124},
   review={\MR{1031969}},
}

\bib{MR3205487}{article}{
   author={Dow, Alan},
   title={Set-theoretic update on topology},
   conference={
      title={Recent progress in general topology. III},
   },
   book={
      publisher={Atlantis Press, Paris},
   },
   date={2014},
   pages={329--357},
   review={\MR{3205487}},
   doi={10.2991/978-94-6239-024-9\_7},
}

\bib{MR3426909}{article}{
   author={Dow, Alan},
   title={Generalized side-conditions and Moore-Mr\'{o}wka},
   journal={Topology Appl.},
   volume={197},
   date={2016},
   pages={75--101},
   issn={0166-8641},
   review={\MR{3426909}},
   doi={10.1016/j.topol.2015.10.016},
}

\bib{MR3194414}{article}{
   author={Dow, Alan},
   author={Hart, Klaas Pieter},
   title={Reflecting Lindel\"{o}f and converging $\omega_1$-sequences},
   journal={Fund. Math.},
   volume={224},
   date={2014},
   number={3},
   pages={205--218},
   issn={0016-2736},
   review={\MR{3194414}},
   doi={10.4064/fm224-3-1},
}

\bib{MR2897749}{article}{
   author={Dow, Alan},
   author={Shelah, Saharon},
   title={Martin's axiom and separated mad families},
   journal={Rend. Circ. Mat. Palermo (2)},
   volume={61},
   date={2012},
   number={1},
   pages={107--115},
   issn={0009-725X},
   review={\MR{2897749}},
   doi={10.1007/s12215-011-0078-7},
}

\bib{MR1711328}{article}{% 
   author={Farah, Ilijas},
   title={Analytic quotients: theory of liftings for quotients over analytic
   ideals on the integers},
   journal={Memoirs of the American Mathematical Society},
   volume={148},
   date={2000},
   number={702},
   pages={xvi+177},
   issn={0065-9266},
   review={\MR{1711328 (2001c:03076)}},
}

\bib{MR2519221}{article}{
   author={Juh\'{a}sz, Istv\'{a}n},
   author={Koszmider, Piotr},
   author={Soukup, Lajos},
   title={A first countable, initially $\omega_1$-compact but non-compact
   space},
   journal={Topology Appl.},
   volume={156},
   date={2009},
   number={10},
   pages={1863--1879},
   issn={0166-8641},
   review={\MR{2519221}},
   doi={10.1016/j.topol.2009.04.004},
}

\bib{MR1137223}{article}{% 
   author={Juh{\'a}sz, I.},
   author={Szentmikl{\'o}ssy, Z.},
   title={Convergent free sequences in compact spaces},
   journal={Proceedings of the American Mathematical Society},
   volume={116},
   date={1992},
   number={4},
   pages={1153--1160},
   issn={0002-9939},
   review={\MR{1137223 (93b:54024)}},
   doi={10.2307/2159502},
}

\bib{MR597342}{book}{% 
   author={Kunen, Kenneth},
   title={Set theory. An introduction to independence proofs},
   series={Studies in Logic and the Foundations of Mathematics},
   volume={102},
   publisher={North-Holland Publishing Co.},
   place={Amsterdam},
   date={1980},
   pages={xvi+313},
   isbn={0-444-85401-0},
   review={\MR{597342 (82f:03001)}},
}

  \end{biblist}

\end{bibdiv}

\end{document}